\newcommand{\HH}{\mathrm{H}}
\newcommand{\HF}{\operatorname{H\underline{\F}}}
\newcommand{\C}{\mathbf{C}}
\newcommand{\F}{\mathbf{F}}
\newcommand{\classB}{B(\C_2, GL_8(\mathbb R))}
\numberwithin{equation}{section}
\newtheorem{theorem}[equation]{Theorem}
\newtheorem{proposition}[equation]{Proposition}
\newtheorem{corollary}[equation]{Corollary}
\newtheorem{lemma}[equation]{Lemma}
\theoremstyle{definition}
\newtheorem{definition}[equation]{Definition}
\newcommand{\wmono}{ \ar@{>->}[r]}
\newcommand{\wmonovert}{ \ar@{>->}[d]}
\newcommand{\cof}{ \ar@{^{(}->}[r]}
\begin{document}

\title{Floyd's manifold is a conjugation space}

\author{Wolfgang Pitsch}
\address{Universitat Aut\`onoma de Barcelona \\ Departament de Matem\`atiques\\
E-08193 Bellaterra, Spain}
\email{pitsch@mat.uab.es}

\author{J\'er\^ome Scherer}
\address{EPFL \\ Institute of Mathematics\\ Station 8, CH-1015 Lausanne, Switzerland}
\email{jerome.scherer@epfl.ch}

\thanks{The authors are partially supported  by FEDER/MEC grant MTM2016-80439-P.}
\subjclass{Primary 55P91; Secondary 57S17; 55S10; 55S35}


\keywords{Conjugation spaces, realization, equivariant surgery}
\newcommand{\W}{\mathcal{W}}
\newcommand{\A}{\mathcal{A}}

\begin{abstract}
We prove that there is an action of the cyclic group $\C_2$ on the $10$-dimensional Floyd manifold which turns it into
a conjugation manifold. The submanifold of fixed points is the $5$-dimensional Floyd manifold, whose cohomology is isomorphic to that
 of the large one, scaled down by dividing the cohomological degree by a factor two.
\end{abstract}
\maketitle

\section{Introduction}\label{sec:intro}

In \cite{MR0334256} Floyd showed, among many other things, that there exist only two non-trivial cobordism classes that contain manifolds
with three cells, and that they lie in dimensions $10$ and $5$. He also produced representatives of these classes, 
which we name here $F_{10}$ and $F_5$.
His computations show that the mod $2$ cohomology of the larger one has generators $1, u_4, u_6, u_{10}$ in degrees 
$0$, $4$, $6$, and $10$ respectively, whereas the smaller one looks like a divided by $2$ version with generators $1, v_2, v_3, v_5$
in degrees $0$, $2$, $3$, and $5$ respectively. Moreover, the product of the two
middle generators give the top one, so that the abstract ``halving isomorphism'' between $\HH^*(F_{10}; \mathbb F_2)$ and
$\HH^*(F_{5}; \mathbb F_2)$ is actually compatible with the cup product. More is true since Floyd also identified the action of the Steenrod
algebra and computed that $Sq^2u_4 = u_6$ and $Sq^4 u_6 = u_{10}$, an analogous result being true for $F_5$.
Hence the halving isomorphism is also compatible with the action of the Steenrod algebra.

From the point of view of the theory of conjugation spaces introduced by Hausmann, Holm, and Puppe in \cite{MR2171799}, and 
further developments on the action of the Steenrod algebra by Franz and Puppe, \cite{MR2198191}, this remarkable behavior looks
like an algebraic shadow of the fact that the larger manifold $F_{10}$ could be a conjugation manifold.

We give here an affirmative answer to this conjecture, after several attempts.
This problem appears as an open question in our previous article on the subject, \cite{zoo}, where
this was only proved for the suspension spectrum in the stable homotopy category.  
The main problem is to endow $F_{10}$ with an action by the group of two elements $\C_2$ in such a way that it becomes a conjugation space 
with fixed points~$F_5$.
\medskip

\noindent {\bf Theorem~\ref{thmF10isconjugation}.}
{\it
The Floyd manifold $F_{10}$ is a smooth conjugation manifold with fixed points $F_5$.
}

\medskip

The strategy is to analyze carefully Floyd's surgery construction and perform it equivariantly. For this first step we use L\"uck and Uribe's
theory of equivariant principal bundles, \cite{MR3331607}. This yields an action of $\C_2$ on $F_{10}$. The second and final step consists
then in verifying the ``cohomological purity'' in the genuine equivariant stable sense as explained in our joint work with Ricka, \cite{PRS}. A
key ingredient here is Clover May's splitting result, \cite{Clover}.

\medskip

{\bf Acknowledgements.} We would like to thank Nick Kuhn for posting a question and follow-up on the algebraic topology discussion list
in 2008 about the Floyd manifolds, which appear in Floyd's beautiful article \cite{MR0334256}. We would also like to thank
Nicolas Ricka, Jie Wu, Jean-Claude Hausmann, and Ian Hambleton for enlightening discussions on this subject.

\section{Construction of Floyd's manifolds}\label{sec:Floydsmnflds}

We will go through a  very careful analysis of Floyd's construction  \cite[Section~3]{MR0334256} to ascertain how it inherits a natural $\C_2$ action. 
Let $S^3 \subset \mathbb{C} \oplus \mathbb{C}$ denote the unit sphere in the standard metric. It inherits two intertwined actions: one on the right 
by multiplication by elements in the unit circle $S^1 \subset \mathbb{C}$ and the other on the left by $\C_2$ is induced by conjugation on each factor. 
If $\theta \in S^1$ and $\tau$ denotes the non-trivial element in $\C_2$, then :
  \begin{equation}\label{eqn:S3action}
    \tau(x\theta) = \tau(x) \overline{\theta} \ \ \hbox{\rm for all} \ x \in S^3
    \tag{$\star$}
  \end{equation}
%
As a $\C_2$-representation sphere $S^3 = S^{2 \sigma+1} = S^{2\rho-1}$ in the notation of \cite{PRS}, where $\sigma$ stands for 
the sign representation and $\rho$ for the regular representation.  

Notice that because of (\ref{eqn:S3action}), the action of $S^1$ on $S^3$ restricts to an action of $\C_2 \cong \{\pm 1 \} \subset S^1$ on 
$(S^3)^{\C_2} = S^1$, more precisely it restricts to the antipodal action on the fixed circle. 

Let us define the projective planes $\mathbb{C}P ^2$ and $\mathbb{H}P^2$ as the set of \emph{right} complex lines in $\mathbb{C}^3$ 
(resp. quaternionic lines in $\mathbb{H}^3$). To fix notations, writing $\mathbb{K}$ for $\mathbb{C}$ or $\mathbb{H}$, we have
an element 
\[
[x:y:z] = \{ (x\lambda, y  \lambda, z \lambda) \, | \, \lambda \in \mathbb{K}^\ast \} \in \mathbb{K}P^2
\]
for any $(x,y,x) \in \mathbb{K}^3\setminus \{0,0,0\}$.

Decompose $\mathbb{H}$ as $\mathbb{C} \oplus \mathbb{C}j$. Then $\mathbb{H}$ also has two intertwined actions, 
one by \emph{left} multiplication by elements in the unit circle $S^1$ on the second $\mathbb{C}$-copy, and the other, also on the left, by $\C_2$ and induced by conjugation on each $\mathbb{C}$-copy above. Both actions induce actions on the quaternionic projective plane and these two actions satisfy:
  \begin{equation}\label{eqn:Haction}
    \tau(\theta x) = \overline{\theta} \tau(x) \ \ \hbox{\rm for all} \ x \in \mathbb{H}P^2
    \tag{$\star\star$}
  \end{equation}

To analyze the $\C_2$-fixed points and the inherited action from $S^1$, assume that we have a point $[x:y:z]$ with $z \neq 0$ which is fixed 
(the two other cases $x \neq 0$ and $y \neq 0$  are similar). 
Let us pick the representative of this class of the form $(a,b,1) \in \mathbb{H}^3$. Then
\[
\tau\cdot [a:b:1] = [\overline{a}:\overline{b}:1]
\]
Since $(a,b,1)$ and $(\overline{a},\overline{b},1)$ are in the same right quaternionic line if and only if $a,b \in \mathbb{R} \oplus \mathbb{R}j$, 
this shows that $(\mathbb{H}P^2)^{\C_2} \cong \mathbb{C}P^2$ , where the complex numbers lie ``diagonally'' in $\mathbb H$.
Moreover, since $\theta \cdot 1 = 1$ for $\theta \in S^1$, the $S^1$ action on $\mathbb{H}P^2$ restricts to a $\{ \pm 1 \}$-action on the 
$\C_2$-fixed points $\mathbb{C}P^2$, and this action coincides with the one induced by conjugation on $\mathbb{C} = \mathbb{R} \oplus \mathbb{R}j$
by definition of the $S^1$-action (on the second coordinate only).


Let us consider $\widetilde{F_{10}} = S^3 \times_{S^1} \mathbb{H}P^2$ just like Floyd did in  \cite[Lemma~3.5]{MR0334256}.
Because of the intertwined $S^1$ and $\C_2$-actions on each factor, 
$\widetilde{F_{10}}$ has an induced $\C_2$-action and by the previous computations:
\[
(\widetilde{F}_{10})^{\C_2} = S^1 \times_{\C_2} \mathbb{C}P ^2 = \widetilde{F}_5
\]

Notice that both manifolds $\widetilde{F_{10}}$ and $\widetilde{F_5}$ fit into fibration sequences by projecting on the first factors, 
where the right one is acted upon by $\C_2$ and has as fixed points the left one:
\[
\xymatrix{
\mathbb{C}P^2 \ar[d] \ar@{^{(}->}[r] & \mathbb{H}P^2 \ar[d] \\
S^1 \times_{\C_2}  \mathbb CP^2\ar@{->>}[d] \ar@{^{(}->}[r] &  S^3 \times_{S^1} \mathbb{H}P^2 \ar@{->>}[d]\\
 S^1 \ar@{^{(}->}[r] & S^\rho
}
\]
 The base spaces are respectively $S^1 = \mathbb RP^1 = S^1/\C_2$ and $S^\rho = \mathbb CP^1 = S^3/S^1$. It is immediate to check that the $\C_2$-action is smooth, and we have in fact a smooth fibration. As a direct consequence of~\cite[Theorem 5.3]{MR2171799}:

\begin{proposition}\label{prop:ftildeisconj}
The manifold $\widetilde{F}_{10}$ is a smooth conjugation  manifold with fixed point set $\widetilde{F}_5$.
\hfill{\qed}
\end{proposition}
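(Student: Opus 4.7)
The plan is to apply Theorem~5.3 of Hausmann--Holm--Puppe, which states that the total space of a suitably $\C_2$-equivariant fiber bundle is a conjugation space as soon as its base and fiber are. The relevant bundle is the one displayed just before the proposition,
\[
\mathbb{H}P^2 \hookrightarrow \widetilde{F}_{10} = S^3 \times_{S^1} \mathbb{H}P^2 \twoheadrightarrow S^\rho,
\]
together with its restriction to the $\C_2$-fixed points
\[
\mathbb{C}P^2 \hookrightarrow \widetilde{F}_{5} = S^1 \times_{\C_2} \mathbb{C}P^2 \twoheadrightarrow S^1.
\]

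First I would recall that the base $S^\rho = \mathbb{C}P^1$ is the standard conjugation $2$-sphere, with fixed-point set $\mathbb{R}P^1 = S^1$; this is one of the founding examples in~\cite{MR2171799}. Second, I would observe that the fiber $\mathbb{H}P^2$ equipped with the $\C_2$-action coming from complex conjugation on each summand of $\mathbb{H} = \mathbb{C} \oplus \mathbb{C} j$ is a conjugation manifold with fixed-point set $\mathbb{C}P^2$, again a classical example in the Hausmann--Holm--Puppe theory. Both of these identifications match precisely the descriptions established earlier in the section.

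Third, and this is where the work of the preceding pages is used, I would verify that the hypotheses of Theorem~5.3 hold for the bundle above. The content to check is that the $S^1$-action used to form the Borel construction and the $\C_2$-action intertwine in the way required by the theorem. On the $S^3$ side this is exactly the intertwining relation~$(\star)$, and on the $\mathbb{H}P^2$ side it is the intertwining relation~$(\star\star)$; together they ensure that the $\C_2$-action on $S^3 \times \mathbb{H}P^2$ descends to the balanced product and that the bundle projection is $\C_2$-equivariant. The smoothness of the action, and hence of the fibration, is immediate from the smoothness of the $S^1$- and $\C_2$-actions on $S^3$ and $\mathbb{H}P^2$.

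With these ingredients in hand, Theorem~5.3 applies and yields directly that $\widetilde{F}_{10}$ is a smooth conjugation manifold whose fixed-point set is the fixed-point bundle $\widetilde{F}_{5}$. The only genuine thing one has to do is bookkeeping of the two intertwining relations $(\star)$ and $(\star\star)$; no further computation is required, since the conjugation cohomological data on $\mathbb{H}P^2$ and $S^\rho$ is automatically transferred to $\widetilde{F}_{10}$ through the theorem. I expect no serious obstacle in this argument, precisely because the intertwining was arranged in advance so as to feed Theorem~5.3.
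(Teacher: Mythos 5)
Your argument is exactly the one the paper uses: the proposition is stated as an immediate consequence of \cite[Theorem~5.3]{MR2171799} applied to the displayed fibration $\mathbb{H}P^2 \hookrightarrow \widetilde{F}_{10} \twoheadrightarrow S^\rho$, whose fixed-point fibration is $\mathbb{C}P^2 \hookrightarrow \widetilde{F}_5 \twoheadrightarrow S^1$. You simply spell out the verification of the hypotheses (base and fiber are conjugation manifolds, the intertwining relations $(\star)$ and $(\star\star)$ make the action descend to the balanced product) that the paper leaves implicit.
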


Since the $S^1$-action  on $\mathbb HP^2$ has a fixed point, which moreover can be chosen to lie on $\widetilde{F_5}$, 
both fibrations have compatible sections:
\[
\xymatrix{
	& S^1 \times_{\C_2} \mathbb{C}P^2\ar@{->>}[dd] \ar@{^{(}->}[rr] & &  S^3 \times_{S^1} \mathbb{H}P^2 \ar@{->>}[dd]\\
	S^1 \ar@{^{(}->}[rr]|\hole \ar@{=}[dr] \ar@{^{(}->}[ur] &  & S^\rho \ar@{=}[dr] \ar@{^{(}->}[ur] & \\
 & 	S^1 \ar@{^{(}->}[rr] &  &S^\rho 
}
\]
where again, the right hand side diagram has a $\C_2$-action with fixed points  left hand side diagram.

To construct his manifolds $F_{10}$ and $F_5$, Floyd proceeds by surgering these split spheres in $\widetilde{F_{10}}$ 
and $\widetilde{F_5}$ and we want to do the same equivariantly on $\widetilde{F_{10}}$ and simultaneously on $\widetilde{F_{5}}$.

\section{The surgery step}\label{sec:surgstep}

We will use the framework developed by L\"uck and Uribe in \cite{MR3331607} to prove that the normal bundle to $S^\rho \hookrightarrow \widetilde{F_{10}}$ is equivariantly trivial. We will work with the  associated frame bundle, as it is a principal $GL_8(\mathbb R)$-bundle
whose triviality is equivalent to that of the normal bundle.

 Because we have split the fibration $\widetilde{F}_{10} \rightarrow S^\rho$ by choosing an $S^1$-fixed point in $\mathbb HP^2$, 
the normal bundle to the splitting $S^\rho \hookrightarrow \widetilde{F}_{10}$ is given by the vertical tangent bundle in the fibration, 
that is the tangent bundle to the fiber $\mathbb{H}P^2$. So, if $T\widetilde{F_{10}}$ is the tangent bundle, 
our model for the normal bundle is $T_V\widetilde{F_{10}}$, the rank $8$ vertical bundle, which fiberwise is the kernel of 
the projection to $S^\rho$. The group $\C_2$ acts by diffeomorphisms on $\widetilde{F_{10}}$, and by equivariance, preserves this bundle: 
for $\gamma \in \C_2$, the tangent map $T\gamma$ acts on the left on $T_V\widetilde{F_{10}}$. The associated frame bundle 
$Fr(T_V \widetilde{F_{10}})$ has as model the bundle one obtains by considering fiberwise, over a point $x \in S^\rho$,
\[
\operatorname{Isom}(\mathbb{R}^8,T_{xV}\widetilde{F_{10}}),
\]
the space of linear isomorphisms out of $\mathbb{R}^8$. The canonical left action of $GL_8(\mathbb{R})$ induces then a right action 
by contravariance on $Fr(T_V \widetilde{F_{10}})$. Precomposition commutes with the postcomposition $\C_2$-action induced by 
the tangent action on $T_{xV}\widetilde{F_{10}}$. Hence the associated frame bundle is an equivariant bundle without intertwine.

We first need to fix a set $\mathcal{R}$ of local isotropy representations, satisfying Condition~(H) of L\"uck-Uribe, \cite[Definition~6.1]{MR3331607}. 
This ensures that the classification problem of the normal bundle is a purely homotopical one by \cite[Theorem~10.1]{MR3331607}, which
means that the equivariant triviality of the bundle will be a consequence of the null-homotopy of the classifying map.

Locally, in a neighborhood of the fixed point that was chosen to split the fibration, the vertical direction looks like 
a quadruple of complex numbers $(z_1,z_2,z_3,z_4)$ on which $\C_2$ acts by complex conjugation as described in the previous section.
Hence the representation of $\C_2$ is given by a homomorphism $\alpha\colon \C_2 \rightarrow GL_8(\mathbb R)$ conjugate
to $I_4 \oplus (-I_4)$.

At a non-fixed point, the isotropy subgroup is per force the trivial group and the representation the trivial one.
We thus set $\mathcal{R}' =\{(1_{\C_2},triv), (\C_2, \alpha) \}$, and close it under conjugation by elements in $GL_8(\mathbb{R})$ to form
a family $\mathcal{R}$. This is, in the terminology of \cite[Definition~3.4]{MR3331607}, the family of representations associated to $S^\rho$. 
Notice that the passage  from pre-families to families is here effortless: the group $\C_2$ is so small that two representations thereon either 
coincide or coincide  only on the trivial element. Because $\C_2$ is finite and $GL_8(\mathbb{R})$ is a quasi-connected Lie group, 
by \cite[Theorem 6.3]{MR3331607}, condition (H) is satisfied for the family $\mathcal{R}$.
These choices define a classifying space $B(\C_2, GL_8(\mathbb{R}), \mathcal{R})$, see \cite[Theorem~11.4]{MR3331607}, which
we write $\classB$ for short. The isomorphism classes of equivariant bundles over $S^\rho$ with isotropy representations in $\mathcal{R}$ are in bijection 
with the set of equivariant homotopy classes into~$\classB$. Hence, to our bundle corresponds a (homotopy class of) $\C_2$-equivariant
map $f\colon S^\rho \rightarrow \classB$. We analyze this map by decomposing $S^\rho$ into the fixed equator $S^1$ and two $2$-cells,
the hemispheres, that are permuted. Our aim is to extend this map to a map $F$ defined on the whole ball $B^\rho$ whose boundary is $S^\rho$. 
We will do this by first extending over the fixed equatorial disk, and then filling-in the two remaining spheres. 
Since we are only interested in the triviality of our bundle, we will make our computations in the reduced setting, i.e. 
we consider $f$ as an element in the set of pointed homotopy classes:
\[
[S^\rho, \classB]^{\C_2}
\]


Our first step is to restrict our equivariant bundle to the fixed circle and show that it is trivial.

\begin{lemma}\label{lem:extendequator}
The classifying map $f$ extends to a map $f'\colon S^\rho \cup D^2 \rightarrow \classB$.
\end{lemma}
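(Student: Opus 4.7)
The plan is to show that $f|_{S^1}$ is null-homotopic in the $\C_2$-fixed subspace of $\classB$, so that it extends over a trivially acted equatorial $2$-disk attached along the fixed equator. By the standard description of fixed-point spaces of L\"uck--Uribe classifying spaces, and since every point of $S^1$ carries isotropy $\C_2$ acting via $\alpha$, the map $f|_{S^1}$ lands in the component
\[
B(C_{GL_8(\mathbb R)}(\alpha)) \simeq B(GL_4(\mathbb R) \times GL_4(\mathbb R))
\]
of $(\classB)^{\C_2}$. Its homotopy class is detected by $\pi_1 = \Z/2 \times \Z/2$, namely by the pair $\bigl(w_1(V_+), w_1(V_-)\bigr)$ of first Stiefel--Whitney classes of the two isotypic summands $V|_{S^1} = V_+ \oplus V_-$ of the restricted equivariant rank-$8$ bundle.

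The next step is to compute both invariants. The fixed summand $V_+$ coincides with the vertical tangent bundle of the fibration $\widetilde{F}_5 \to S^1$ restricted to the section through $[0:0:1] \in \mathbb{C}P^2$. Its monodromy around the base $S^1$ is the derivative at $[0:0:1]$ of the $\C_2$-action on $\mathbb{C}P^2$, i.e.\ componentwise complex conjugation, which acts on $T_{[0:0:1]}\mathbb{C}P^2 \cong \mathbb{C}^2$ with real determinant $(-1)^2 = +1$. Hence $V_+$ is orientable and $w_1(V_+) = 0$. For $V_-$ I would exploit that $V|_{S^1}$ is the restriction of $V$ over $S^\rho = S^2$ along the null-homotopic inclusion $S^1 \hookrightarrow S^2$; this forces $V|_{S^1}$ to be trivial as a non-equivariant bundle, so the Whitney sum formula gives $w_1(V_-) = w_1(V|_{S^1}) - w_1(V_+) = 0$.

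Both invariants vanishing, $f|_{S^1}$ represents the trivial class in $\pi_1 B(GL_4(\mathbb R) \times GL_4(\mathbb R))$, and any null-homotopy supplies the desired extension $f'\colon S^\rho \cup D^2 \to \classB$. I expect the main obstacle to be the first step: the precise identification of the component of $(\classB)^{\C_2}$ in which $f|_{S^1}$ takes values, and the resulting reduction of the equivariant extension problem to the vanishing of two first Stiefel--Whitney classes. The monodromy computation for $V_+$ and the Whitney-sum argument for $V_-$ are straightforward once that reduction is secured.
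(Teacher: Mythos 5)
Your proof is correct, and it takes a genuinely different route from the paper's, although both start identically: restrict $f$ to the fixed circle, identify the target component of $(\classB)^{\C_2}$ as $B(GL_4(\mathbb R)\times GL_4(\mathbb R))$ via L\"uck--Uribe's fixed-point theorem, and recognize that the obstruction lives in $\pi_1 B(GL_4\times GL_4)\cong \Z/2\times\Z/2$. Where the paper invokes Floyd's observation that the non-equivariant normal bundle over $S^\rho$ is \emph{twice} a rank-$4$ bundle $W$, writes down a classifying map $G = j\circ(g\times g)\circ\Delta$ for $W\oplus W$, and then argues that $G\vert_{S^1}$ is null because it factors through the null-homotopic inclusion $S^1\hookrightarrow S^2$, you instead compute the two $\Z/2$-coordinates $(w_1(V_+), w_1(V_-))$ directly: $w_1(V_+)=0$ by a monodromy calculation, since the derivative of complex conjugation at the fixed point $[0:0:1]\in\mathbb CP^2$ has real determinant $(-1)^2 = +1$; and $w_1(V_-)=0$ from the Whitney sum formula combined with the triviality of the non-equivariant restriction $V\vert_{S^1}$ (the restriction of a bundle over $S^2$ along the null-homotopic $S^1\hookrightarrow S^2$). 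Your version is arguably tighter: the paper's argument needs Floyd's external ``twice a bundle'' input, and one must further check that this non-equivariant doubling is compatible with the equivariant isotypic splitting over the fixed circle, whereas your monodromy computation is self-contained and purely local. Note that the triviality of $V\vert_{S^1}$ alone only gives $w_1(V_+)+w_1(V_-)=0$, which is not enough (both could be nonzero), so your separate computation of $w_1(V_+)$ via monodromy is indeed essential, not redundant.
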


\begin{proof}
By Lück-Uribe's results, the restriction of $f$ to $S^1$ amounts to having a non-equivariant bundle on $S^1$ but with a reduced structural group. 
More precisely, since $S^1$ is fixed under the $\C_2$-action, its image lies in the fixed points of the equivariant classifying space.
By \cite[Theorem 13.1]{MR3331607},  these fixed points are
 \begin{eqnarray*}
 \classB^{\C_2} & \simeq & \coprod_{\alpha \in Hom_{\mathbb{R}}(\C_2,GL_8(\mathbb{R}))/GL_8(\mathbb{R})}  BC_{GL_8(\mathbb{R})} (\alpha).
 \end{eqnarray*}
Since up to conjugacy we only have one representation in $\mathcal{R}$, let us fix as a representative the representation that sends $\tau \in \C_2$ to $\left(\begin{matrix} I_4 & 0 \\ 0 & -I_4 \end{matrix}\right)$. 
By direct computation $C_{GL_8(\mathbb{R})} (\alpha) = GL_4(\mathbb{R}) \times GL_4(\mathbb{R})$
and the trivial $ \dashv$ fixed points adjunction yields an isomorphism
\[
[S^1,\classB]^{\C_2} \cong [S^1, \classB^{\C_2}] \cong \pi_1(B( GL_4(\mathbb{R}) \times GL_4(\mathbb{R}))) \cong \mathbb{Z}/2 \oplus  \mathbb{Z}/2.
\]
To analyze our restricted bundle on $S^1$, recall from Floyd's work that the bundle on $S^\rho$ is non-equivariantly two times some bundle. 
So let $g: S^2 \rightarrow BGL_4(\mathbb{R})$ be a classifying map of the latter bundle. This means that a classifying map for the double 
is given by the following composition:
\[
\xymatrix{
G: S^2 \ar[r]^- \Delta & S^2 \times S^2 \ar[r]^-{g \times g} & BGL_4(\mathbb{R}) \times BGL_4(\mathbb{R}) \ar[r]^-j & BGL_8(\mathbb{R}) 
}
\]
Here again, by definition of the Whitney sum, the map $j$ is induced by the diagonal embedding of the two blocks, 
i.e. by the inclusion $C_{GL_8(\mathbb{R})} \subset GL_8(\mathbb{R})$! 
The equivariant classifying map $\classB$ has a canonical forgetful map to $BGL_8(\mathbb{R})$, and Lück-Uribe's argument 
explains that the composition
\[
BC_{GL_8(\mathbb{R})} \rightarrow  \classB \rightarrow BGL_8(\mathbb{R})
\]
is given by the same map $j$, induced by the canonical inclusion. This means that a classifying map for our restricted 
bundle is given precisely by the composition:
\[
\xymatrix{
G\vert_{S^1}: S^1 \ar[r] &  S^2 \ar[r]^- \Delta & S^2 \times S^2 \ar[r]^-{g \times g} & BGL_4(\mathbb{R}) \times BGL_4(\mathbb{R})
}
\]
As the first map is null-homotopic, so is the composite and we can extend $G\vert_{S^1}$ to a map on the disc.
\end{proof}

Let us think about the added disk $D^2$ above as an equatorial disk in $B^\rho$, fixed by the $\C_2$-action. 
To continue our argument, we choose as an extension  of the map 
$G\vert_{S^1}$ to $D^2$ the one provided by the map $G$ itself on the southern hemisphere of its source $S^2$. The following lemma
formalizes the fact that we can fill the northern hemisphere with the non-equivariant trivialization obtained by Floyd and extend to the southern
half-ball by applying the $\C_2$-action.

\begin{lemma}\label{lem:extendeallover}
The classifying map $f$ extends to a map $F\colon B^\rho \rightarrow \classB$.
\end{lemma}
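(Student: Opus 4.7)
The plan is to reduce the extension to a single non-equivariant problem on one half of $B^\rho$ and use $\tau$ to handle the other. The equatorial fixed disk $D^2 \subset B^\rho$ splits $B^\rho$ into two half-balls $B_+$ and $B_-$ interchanged by $\tau$; the boundary of $B_+$ is the 2-sphere $D^2_+ \cup_{S^1} D^2$, where $D^2_+$ is the northern hemisphere of $S^\rho$. Because $D^2$ is $\tau$-fixed and $f'|_{D^2}$ is already in place, an equivariant extension of $f'$ over $B^\rho$ is equivalent to a non-equivariant extension of $f'|_{\partial B_+}\colon S^2 \to \classB$ over $B_+ \cong D^3$, together with the formula $F(x) := \tau \cdot F(\tau x)$ for $x \in B_-$; consistency on the shared fixed disk $D^2$ is then automatic.

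I would next identify the boundary map. By the specific choice of extension made just before the lemma, $f'|_{D^2}$ is the restriction of $G\colon S^2 \to BGL_8(\mathbb{R})$ to the southern hemisphere of its source, while $f|_{D^2_+}$ agrees non-equivariantly with the restriction of $G$ to the northern hemisphere. Gluing these two hemispheres along $S^1$, the composite of $f'|_{\partial B_+}$ with the forgetful map $\classB \to BGL_8(\mathbb{R})$ is precisely $G$. Floyd's observation that the bundle on $S^\rho$ is non-equivariantly a doubling implies $G$ classifies $E \oplus E$, whose class $2[E]$ in $\pi_2(BGL_8(\mathbb{R})) \cong \mathbb{Z}/2$ vanishes; hence $G$ is null-homotopic in $BGL_8(\mathbb{R})$.

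The main obstacle is to upgrade this null-homotopy into one inside $\classB$ itself. Since $B_+$ is contractible, extending $f'|_{\partial B_+}$ over $B_+$ is equivalent to $f'|_{\partial B_+}$ being null-homotopic in $\classB$, which does not automatically follow from a null-homotopy in $BGL_8(\mathbb{R})$. To address this I would combine the previous lemma---which shows $f'|_{D^2}$ is null-homotopic inside the fixed-point component $BC_{GL_8(\mathbb{R})}(\alpha) = B(GL_4(\mathbb{R}) \times GL_4(\mathbb{R}))$---with the non-equivariant null-homotopy of $G$ established above, and run an obstruction-theoretic argument on the forgetful map $\classB \to BGL_8(\mathbb{R})$, whose fibre is controlled by \cite[Theorem 13.1]{MR3331607}. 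The resulting null-homotopy of $f'|_{\partial B_+}$ inside $\classB$ produces the extension $F|_{B_+}$, and the formula above equivariantly transports it to $B_-$, yielding the desired $F$.
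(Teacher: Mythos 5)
Your geometric reduction---extend non-equivariantly over the half-ball $B_+$ and transport to $B_-$ by $\tau$---is the right idea and is exactly the content of the free $\dashv$ forgetful adjunction the paper applies to the pointed free $\C_2$-space $S^\rho \cup D^2 \simeq S^2 \vee S^2$. Your identification of $f'|_{\partial B_+}$ with $G$, and the vanishing of $[G]=2[g]$ in $\pi_2(BGL_8(\mathbb R)) \cong \mathbb Z/2$, also match the paper's argument.

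The problem is in your last paragraph: what you flag as ``the main obstacle'' is not one, and the obstruction-theoretic argument you sketch is too vague to close the gap as written. By L\"uck--Uribe's Theorem~13.1 applied to the \emph{trivial} subgroup, the underlying non-equivariant space is
$\classB^{e} \simeq \coprod_{\alpha} BC_{GL_8(\mathbb R)}(\alpha)$, where $\alpha$ ranges over $GL_8(\mathbb R)$-conjugacy classes of representations of the trivial group; there is exactly one such $\alpha$ and its centralizer is all of $GL_8(\mathbb R)$, so $\classB^{e}\simeq BGL_8(\mathbb R)$ and the forgetful map $\classB \to BGL_8(\mathbb R)$ is a non-equivariant weak equivalence. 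Consequently a null-homotopy of $G$ in $BGL_8(\mathbb R)$ \emph{is} a null-homotopy of $f'|_{\partial B_+}\colon S^2 \to \classB$; there is nothing to lift. (Also note that Lemma~\ref{lem:extendequator} does not show $f'|_{D^2}$ is null-homotopic in $BC_{GL_8(\mathbb R)}(\alpha)$---that would be vacuous since $D^2$ is contractible; it shows the restriction to $S^1$ is null, which is what permitted the extension over $D^2$ in the first place.) Replacing your final paragraph by the single observation that $\classB^{e}\simeq BGL_8(\mathbb R)$ completes the proof and brings it in line with the paper's.
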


\begin{proof}
Observe that $S^\rho \cup D^2$ is equivariantly homotopic to $S^2 \vee S^2$ with the flip action, and this a free pointed $\C_2$-space.  
By the free $\dashv$ forgetful adjunction:
\[
[S^2\vee S^2,\classB]^{\C_2} \cong [S^2, \classB^{e}] \cong [S^2, BGL_8(\mathbb{R})]
\]
where the second isomorphism comes again from \cite[Theorem 13.1]{MR3331607}. The second homotopy group of $BGL_8(\mathbb R)$
is isomorphic to $\pi_1 O(8) \cong \mathbb Z/2$.
%
Through this isomorphism $f'$ corresponds to its restriction to the union of the northern hemisphere of $S^\rho$ and the equatorial disc.
However, by construction, a classifying map for this bundle is given by $G$: we had it on the northern hemisphere and chose the southern 
part on the added disc $D^2$. In particular, this is a classifying map of twice a bundle. 
We use now Floyd's original argument to see that this bundle is trivial (it is twice something in $\mathbb Z/2$).
Therefore $f'$ is null-homotopic and thus extends to $B^\rho$.
\end{proof}

%



As an immediate consequence we have our desired result that will be used to do the surgery step:

\begin{theorem}\label{thm:equivbndletrivial}
The equivariant normal bundle to $S^\rho \hookrightarrow \widetilde{F_{10}}$ is  equivariantly trivial, i.e. the total space of the fibration is equivariantly homeomorphic to $S^\rho \times \mathbb{R}^{4\rho}$. \hfill{\qed}
\end{theorem}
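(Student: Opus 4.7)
The plan is to combine the preceding lemmas with the Lück--Uribe classification. Concretely, Condition~(H) for the family $\mathcal{R}$ has already been verified, so \cite[Theorem~10.1]{MR3331607} provides a bijection between isomorphism classes of $\C_2$-equivariant principal $GL_8(\mathbb{R})$-bundles over $S^\rho$ with isotropy in $\mathcal{R}$ and the pointed equivariant homotopy set $[S^\rho,\classB]^{\C_2}$. The frame bundle $Fr(T_V\widetilde{F_{10}})$ is classified by the map $f$ introduced in Section~\ref{sec:surgstep}, and equivariant triviality of $T_V\widetilde{F_{10}}$ is equivalent to equivariant null-homotopy of $f$.

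Next, I would apply Lemma~\ref{lem:extendeallover} to extend $f$ to an equivariant map $F\colon B^\rho \rightarrow \classB$. Since $B^\rho$ is a linear $\C_2$-disc, the straight-line homotopy to the origin is $\C_2$-equivariant, so $B^\rho$ is equivariantly contractible and $F$ is equivariantly null-homotopic. Restricting along the boundary inclusion $S^\rho \hookrightarrow B^\rho$ shows that $f$ is null-homotopic as well. Through the Lück--Uribe correspondence this translates into equivariant triviality of the frame bundle, and hence of its associated vector bundle $T_V\widetilde{F_{10}}$.

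To match the explicit identification with $S^\rho \times \mathbb{R}^{4\rho}$, I would finally read off the fiber representation from the local isotropy analysis carried out above: at the chosen $S^1$-fixed point of $\mathbb{H}P^2$ the vertical tangent space is four copies of $\mathbb{C}$ with $\C_2$ acting by complex conjugation, and since $\mathbb{C}$ with complex conjugation is the regular representation $\rho$, the fiber is $4\rho$ as claimed.

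I do not foresee any real obstacle at this step; the substantive work lies in Lemmas~\ref{lem:extendequator} and~\ref{lem:extendeallover}, where Floyd's observation that the bundle is non-equivariantly a double was crucial, first for trivialising on the fixed equator and then for extending across the two freely permuted hemispheres. The present theorem simply repackages that extension into the geometric statement needed for the equivariant surgery step to follow.
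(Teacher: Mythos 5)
Your proposal is correct and matches the paper's intent exactly: the theorem is stated as an immediate consequence of Lemmas~\ref{lem:extendequator} and~\ref{lem:extendeallover}, and the argument is precisely that the extension to the equivariantly contractible ball $B^\rho$ forces the classifying map $f$ to be equivariantly null-homotopic, hence the bundle trivial via the Lück--Uribe correspondence. Your identification of the fiber as $4\rho$ (four copies of $\mathbb{C}$ with conjugation, each equal to the regular representation) is also right and fills in a detail the paper leaves implicit.
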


\section{The Floyd manifold $F_{10}$ is a conjugation manifold}\label{sec:conjugation}

We are now ready to perform Floyd's construction of the manifolds $F_5$ and $F_{10}$ simultaneously and equivariantly
on the larger one.
By Theorem~\ref{thm:equivbndletrivial}, we learn that the equivariant tubular neighborhood of $S^\rho$ is equivariantly 
homeomorphic to  $S^\rho \times D(4\rho)$ with boundary $S^\rho \times S^{4\rho -1}$. We now remove the interior and glue back a copy of 
$ D(\rho+1 ) \times S^{4 \rho -1}$, thus killing the sphere $S^\rho$ to get $F_{10}$ with a $\C_2$-action. 
Observe that on the fixed point subset, we replaced $S^1 \times D(4)$, a tubular neighbourhood of $S^1 \hookrightarrow \widetilde{F_5}$, 
by $D(2) \times S^3$, i.e. we performed a standard non-equivariant surgery and get by Floyd's argument his manifold $F_5$. 
In particular, by construction, we have:
\[
(F_{10})^{\C_2} = F_5
\]
To prove our main result is now, we will verify the homotopical criterion for being a conjugation space from \cite{PRS}.
Let $\HF$ denote the genuine $\C_2$-equivariant spectrum associated to the constant functor with values $\F_2$,  the field of two elements.
	
\begin{theorem}
\label{thm:PRS} \cite[Theorem~1.3]{PRS}
A compact equivariant space $X$ is a conjugation space if and only if there exist finitely many non-negative integers $(m_i)_{i \in I}$ such that 
\[
X \wedge \HF \simeq \bigvee_{i \in I} S^{n_i\rho} \wedge \HF
\] 
\end{theorem}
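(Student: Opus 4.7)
The plan is to prove each direction of this equivalence separately, in both cases translating between the algebraic structure of a conjugation space in the sense of Hausmann, Holm and Puppe and the stable splitting asserted on the right-hand side. The key technical input will be Clover May's splitting theorem \cite{Clover} for $\C_2$-equivariant $\HF$-modules, which pins down precisely the wedge summands that can occur.

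For the $(\Leftarrow)$ direction, I would start from the assumed splitting and read off both cohomologies. The Bredon cohomology of $S^{n\rho} \wedge \HF$ is a well-understood free module over $\HH^{\star,\star}_{\C_2}(\mathrm{pt};\F_2)$, concentrated in bidegrees along the line generated by $\rho$. Passing to the underlying non-equivariant spectrum recovers $\HH^{\ast}(X;\F_2)$ in even degrees $2 n_i$, while the geometric fixed points recover $\HH^{\ast}(X^{\C_2};\F_2)$ in halved degrees $n_i$. The naturality of these functors, combined with the multiplicative structure inherited from $\HF$, then produces a cohomology frame in the sense of \cite{MR2171799}, and compatibility with the Steenrod algebra in the sense of Franz and Puppe \cite{MR2198191} follows from the analogous analysis of the Steenrod module structure of $\HF$.

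For the $(\Rightarrow)$ direction, which I expect to be the harder part, I would build the splitting from the abstract conjugation data. By hypothesis $X$ admits a frame $\kappa \colon \HH^{2\ast}(X;\F_2) \to \HH^{\ast}(X^{\C_2};\F_2)$ halving the cohomological degree and compatible with restriction. Choose an additive basis of $\HH^{2\ast}(X;\F_2)$ matched by $\kappa$ with a basis of $\HH^{\ast}(X^{\C_2};\F_2)$; lift each basis element to an equivariant stable map $S^{n_i \rho} \wedge \HF \to X \wedge \HF$ and assemble these into a candidate splitting. The task is then to show this candidate is an equivalence of $\HF$-modules.

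The main obstacle is precisely this last verification, and it is where Clover May's theorem does the heavy lifting. Non-equivariantly the map is an equivalence, since by the choice of basis it induces an isomorphism on mod $2$ homology; equivariantly the fixed-point behaviour is dictated by the frame $\kappa$. May's splitting result for $\C_2$-equivariant $\HF$-modules then forces $X \wedge \HF$ itself to decompose as a wedge of $n_i \rho$-suspensions of $\HF$, completing the implication. The delicate point is to ensure that purely cohomological halving data can always be realised by honest equivariant stable maps; this is a stable analogue of the fact that a conjugation frame is determined by, but a priori stronger than, its underlying cohomology isomorphism.
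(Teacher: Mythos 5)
This statement is cited from \cite{PRS} and is not proved in the present paper; it is used as a black box. So there is no in-paper argument to compare your sketch against, and I will assess it on its own merits.

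For the $(\Leftarrow)$ direction your plan is broadly reasonable, but be careful with the phrase ``geometric fixed points recover $\HH^*(X^{\C_2};\F_2)$.'' The geometric fixed points $\Phi^{\C_2}\HF$ are not $\operatorname{H\F}_2$, and reading off the singular cohomology of the fixed set from the splitting $\bigvee S^{n_i\rho}\wedge\HF$ goes through the module structure of $\pi^{\C_2}_\star$ over the RO($\C_2$)-graded coefficient ring $\HF^{\C_2}_\star(\mathrm{pt})$ (the $a$- and $u$-local pieces, i.e.\ the Tate square), not through $\Phi^{\C_2}$ applied naively. This can be repaired, but it is not the one-line deduction your sketch suggests.

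For the $(\Rightarrow)$ direction there is a genuine gap, precisely at the step you flag as delicate. A map of $\HF$-modules $S^{n\rho}\wedge\HF\to X\wedge\HF$ is the same thing as a class in $\HF^{\C_2}_{n\rho}(X)$, and the forgetful map $\HF^{\C_2}_{n\rho}(X)\to \HH_{2n}(X;\F_2)$ need not be surjective for a general $\C_2$-space. The cohomology frame of a conjugation space supplies a Borel class in integer degree $2n$ (via the section $\sigma$), whereas you need a class in RO-degree $n\rho = n + n\sigma$; translating between those degrees amounts to producing a $u^n$-divisibility in $\HF^{\C_2}_\star(X)$, where $u\in\HF^{\C_2}_{\sigma-1}(\mathrm{pt})$ is not invertible. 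Establishing that this lift exists---equivalently that $\HF^{\C_2}_\star(X)$ is a free $\HF_\star$-module on generators in $\rho$-multiples of degree---is exactly what ``purity'' means in \cite{PRS} and is the whole content of the implication, so assuming it to build the candidate map and then invoking May would be circular. The workable route is the reverse of yours: apply C.~May's theorem first to decompose $X\wedge\HF$ into summands $S^{m_i+n_i\sigma}\wedge\HF$ and $S^{s_j}_{a+}\wedge\HF$, and then use the conjugation hypothesis (the frame and the conjugation equation) to rule out the antipodal summands and to force $m_i=n_i$.
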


The starting point to prove this is the very nice structural theorem of  C.~May~\cite{Clover}. Recall that $\sigma$ denotes
the sign representation.

\begin{theorem}
\label{thm:Clover} \cite[Theorem~6.13]{Clover}
Let $X$ be any pointed compact equivariant space. Denote by $S^k_{a+}$  the $k$-dimensional sphere with the antipodal action and a disjoint base-point. Then there exist two families of  finally many non-negative integers integers $(m_i,n_i)_{i \in I}\, (s_j)_{j \in J}$ and an equivariant equivalence:
\[
X \wedge \HF = \bigvee_{i \in I} S^{m_i + n_i\sigma} \wedge \HF\vee \bigvee_{j \in J} S^{s_j}_{a+} \wedge \HF.
\]
\end{theorem}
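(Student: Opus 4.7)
The plan is to proceed by induction on the number of equivariant cells of $X$. Since $X$ is compact, it admits a finite $\C_2$-CW structure built from two types of cells: cells of the form $D^{m+n\sigma}_+$, whose collapse yields $RO(\C_2)$-graded representation spheres $S^{m+n\sigma}$, and induced free cells $(\C_2 \times D^n)_+$, whose collapse yields antipodal spheres $S^n_{a+}$. The base case where $X$ is a point is immediate.

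For the inductive step I would write $X_n = X_{n-1} \cup_f e_n$ for an attached cell $e_n$ and consider the cofiber sequence
\[
X_{n-1} \to X_n \to X_n/X_{n-1},
\]
which, upon smashing with $\HF$, becomes a distinguished triangle of $\HF$-module spectra. By the inductive hypothesis $X_{n-1}\wedge\HF$ already splits in the required form, and $X_n/X_{n-1}\wedge\HF$ is a single sphere summand of one of the two allowed types by construction. To close the induction it suffices to show that the connecting map in this triangle vanishes, so that $X_n\wedge\HF$ splits as the wedge of the two ends.

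The heart of the argument is therefore a rigidity statement for the homotopy category of $\HF$-module $\C_2$-spectra: every map between wedges of standard summands of the two allowed types factors in such a way that its cofiber is again a wedge of the same type. The necessary input is the $RO(\C_2)$-graded homotopy of $\HF$, originally computed by Lewis, together with the structure of the $\C_2$-equivariant dual Steenrod algebra $\HF\wedge\HF$ due to Hu--Kriz; these computations describe the full graded ring of stable operations between standard summands, and in particular show that connecting maps decompose along the two families. A convenient way to organize the bookkeeping would be to work through the slice (or Postnikov) tower of $X\wedge\HF$ and verify that each truncation is already a wedge of $S^{m+n\sigma}\wedge\HF$ and $S^s_{a+}\wedge\HF$ summands, so that no new indecomposables appear in the limit.

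The main obstacle is precisely this rigidity/classification step: proving that in the triangulated homotopy category of compact $\HF$-module $\C_2$-spectra the only indecomposable objects up to suspension are the two listed families. This phenomenon is very specific to the pair $(\C_2,\F_2)$ and fails for larger finite groups or for integer coefficients, which is why the clean splitting asserted here is the substantive content of \cite{Clover}.
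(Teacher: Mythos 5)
This is Theorem~6.13 of \cite{Clover}, which the paper quotes as a black box and does not reprove, so there is no in-paper argument against which to compare your sketch. On its own terms the overall shape — induct along a finite $\C_2$-CW filtration and argue in the triangulated category of $\HF$-module spectra — is the right one, but the inductive step as you have written it contains a real error. You claim that ``it suffices to show that the connecting map in this triangle vanishes,'' so that $X_n\wedge\HF$ is the wedge of its two subquotient pieces. That is both false and stronger than the theorem asserts: the connecting maps
\[
(X_n/X_{n-1})\wedge\HF \longrightarrow \Sigma\,X_{n-1}\wedge\HF
\]
are generically nonzero, and the point of the structure theorem is that their fibers are nonetheless still wedges of the two allowed families. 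Your next sentence states essentially the correct goal, but it contradicts the vanishing claim rather than following from it, so the induction has no actual closing step. Relatedly, a finite $\C_2$-CW structure only has fixed cells $D^n$ and free cells $\C_2\times D^n$, with subquotients $S^n$ and $S^n_{a+}$; the mixed representation spheres $S^{m+n\sigma}$ with $n>0$ do not arise as cell quotients but only appear as cofibers of nontrivial connecting maps, which is another reason the vanishing hypothesis cannot stand.

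The deeper gap is that the ``rigidity'' statement — that the cofiber of any map between wedges of $S^{m+n\sigma}\wedge\HF$ and $S^s_{a+}\wedge\HF$ is again such a wedge — is not a bookkeeping consequence of knowing $\pi^{\C_2}_\star\HF$ (Lewis) and the dual Steenrod algebra (Hu--Kriz). Those are necessary inputs, but one still has to show that no new indecomposable compact $\HF$-modules appear, and that classification is precisely the technical heart of \cite{Clover}, proved there through a careful analysis of the positive and negative cones of $\pi^{\C_2}_\star\HF$ and a lifting argument at the module level. As written, your sketch reduces the theorem to an unproved rigidity assertion that is roughly equivalent to the theorem itself.
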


%
%
%
%

 To apply May's result, we essentially have to perform a computation in equivariant homology. For this we will analyze the two homotopy push-outs that summarize the surgery step. Let $M$ denote the complement of an equivariant tubular neighborhood of the split representation sphere
 $S^\rho \subset \widetilde{F_{10}}$. Then we have two homotopy push-out squares, that we complete by adding the cofibers (where the subscripts
only indicate the horizontal or vertical direction):

\[
\xymatrix{
S^\rho \times S^{4\rho -1} \ar@{^{(}->}[r] \ar@{^{(}->}[d] &  M \ar@{^{(}->}[d] \ar[r] & Cof_{h} \ar@{=}[d] \\ 
S^\rho  \times D(4\rho) \ar@{^{(}->}[r] \ar[d] & {\widetilde{F}_{10}} \ar[r] \ar[d] & Cof_{h} \\
Cof_{vA} \ar@{=}[r] & Cof_{vA}  & \\
 S^\rho \times S^{4\rho -1} \ar@{^{(}->}[r] \ar@{^{(}->}[d] &  M \ar@{^{(}->}[d]  \ar[r]   & Cof_{h} \ar@{=}[d]\\ 
   D(\rho +1)  \times S^{4\rho{-1}} \ar@{^{(}->}[r] \ar[d]  & F_{10} \ar[r] \ar[d] & Cof_{h}  \\
Cof_{vB} \ar@{=}[r] & Cof_{vB}  &
}
\]

Notice that the horizontal cofibers in both diagrams are the same since they share the same top horizontal cofibration. 

We will now analyze the long exact sequences for both squares, in both equivariant homotopy and non-equivariant homotopy. 
To simplify the notation, since all spaces will have free homology, we will denote the mod $2$ homology of a space that has non trivial 
classes say in degrees $n_1,n_2, n_3$  by $\HH[n_1,n_2,n_3]$.  Similarly the equivariant homology of a space $X$ such that 
$\HF \wedge X = \HF \wedge S^{V_1} \vee \HF \wedge S^{V_2} \vee\HF \wedge S^{V_3} $, will be denoted by $\HF[V_1,V_2,V_3]$.

There are three key tools to carry out the computations. Firstly,  maps out of free $\HF$-modules, like $\HF \wedge S^{n\sigma + m}$ are 
determined by the value on one generator, secondly for  free modules of the form $\HF \wedge S^{n\sigma + m}$ 
we have a preferred generator: the only class of minimal degree  that survives to a  non-trivial class when forgetting the actions, and thirdly  we know the answer in the non-equivariant setting, as these elementary computations have been carried out by Floyd~\cite{MR0334256}.

To understand the vertical cofibers in both diagrams, let us recall:

\begin{definition}\label{def:halfsmash}
The \emph{half-smash product} $A \ltimes B$ of two pointed spaces $A$ and $B$ is the cofiber of the inclusion $A \times * \hookrightarrow A \times B$.
\end{definition}

When $A$ and $B$ are $\C_2$-equivariant spaces and the base point in $B$ is a fixed point, the half-smash inherits an action of $\C_2$.

\begin{lemma}\label{lem:halfsmash}
The homotopy cofiber of the projection $A\times B \rightarrow A$ is homotopy equivalent to $\Sigma (A \ltimes B) \simeq \Sigma (A \wedge B) \vee \Sigma B$
and the map from $A$ is null-homotopic.
\end{lemma}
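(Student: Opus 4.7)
The plan is to break the statement into three pieces and verify each in turn: the cofiber of $p : A \times B \to A$ is $\Sigma(A\ltimes B)$; this suspension splits as $\Sigma B \vee \Sigma(A\wedge B)$; and the natural map $A \to \mathrm{cofib}(p)$ from the Puppe sequence is null-homotopic.

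I would first compute the cofiber. The key observation is that $p$ admits the canonical section $i: A \to A \times B$, $a\mapsto (a,\ast)$, with $p\circ i = \mathrm{id}_A$. In the reduced mapping cone
\[
\mathrm{cofib}(p) \;=\; A \cup_p C(A\times B),
\]
the subspace consisting of $A$ together with the reduced cone $CA \subset C(A\times B)$ (embedded via $i$) is the mapping cone of $p\vert_A = \mathrm{id}_A$, hence contractible. Collapsing it yields $\mathrm{cofib}(p) \simeq C(A\times B)/(A\cup CA)$. After this collapse, the top $A\times B\times\{1\}$ of the cylinder is identified with the basepoint (the gluing went through $A$, now collapsed), so both ends of the cylinder become the basepoint and what remains is a suspension; collapsing the leftover $CA$ amounts to quotienting out $A\times\ast$ in the base. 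The outcome is $\Sigma((A\times B)/(A\times\ast)) = \Sigma(A\ltimes B)$. At the same time, the natural map $A \to \mathrm{cofib}(p)$ is by construction the inclusion of $A$ into the mapping cone, which sits inside the contractible subspace $A\cup CA$ that was collapsed; hence under the equivalence it becomes null-homotopic.

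For the splitting, I would use the cofiber sequence $B \hookrightarrow A \ltimes B \twoheadrightarrow A\wedge B$ obtained by further collapsing $\ast\times B$ inside $A\ltimes B = (A\times B)/(A\times\ast)$ to reach $(A\times B)/(A\vee B) = A\wedge B$. The second-factor projection $A\times B\to B$ descends to a retraction $A\ltimes B\to B$ of this inclusion (well-defined after quotienting by $A\times\ast$), so after suspending, the sequence splits and we obtain $\Sigma(A\ltimes B)\simeq \Sigma B \vee \Sigma(A\wedge B)$.

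The hard part will be keeping conventions straight rather than anything substantial. Every ingredient — the product, the cone, the suspension, the half-smash, the section $i$, and the retraction onto the $B$-factor — is a natural construction on pointed $\C_2$-spaces with fixed basepoint, so the whole argument carries over verbatim to the equivariant setting in which the lemma will be applied in the next section.
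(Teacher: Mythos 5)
Your argument is correct and takes essentially the same route as the paper: both exploit the section $a\mapsto(a,\ast)$ of the projection to realize the mapping cone of $\mathrm{id}_A$ as a contractible subspace of $\mathrm{cofib}(p)$, collapse it to obtain $\Sigma(A\ltimes B)$, and use that same observation (the existence of the section) to conclude that $A\to\mathrm{cofib}(p)$ is null-homotopic. The only cosmetic divergence is in the wedge decomposition, where you split the suspended cofiber sequence $B\to A\ltimes B\to A\wedge B$ via the retraction onto $B$, while the paper invokes the classical splitting $\Sigma(A\times B)\simeq\Sigma A\vee\Sigma B\vee\Sigma(A\wedge B)$ directly.
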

\begin{proof}
Factor the identity as $A \hookrightarrow A \times B \rightarrow A$. The associated cofibration sequence 
$A \ltimes B \rightarrow Cof(Id_A) \rightarrow Cof(A\times B \rightarrow A)$ yields the desired result. The decomposition as
a wedge is a consequence of the splitting of the suspension of any product and that the map $A \rightarrow \Sigma (A \ltimes B)$
is null-homotopic follows from the fact that the projection map has a retraction.
\end{proof}

This applies again to the equivariant setting when the base point of the second factor is a fixed point (and the maps we consider are thus equivariant). 
Our interest lies in the half-smash product of representation spheres.

\begin{corollary}\label{cor:halfsmash}
Let $V_1$ and $V_2$ be two orthogonal representations of $\C_2$. The homotopy cofiber of the projection $S^{V_1} \times S^{V_2} \rightarrow S^{V_1}$
is then equivalent to  $S^{V_1+ V_2+1} \vee S^{V_2+1}$.
\end{corollary}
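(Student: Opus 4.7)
The plan is to apply Lemma \ref{lem:halfsmash} directly in the equivariant setting, with $A = S^{V_1}$ and $B = S^{V_2}$, and then identify the resulting smash product of representation spheres. For the lemma to apply equivariantly, the basepoint of $S^{V_2}$ must be a $\C_2$-fixed point. But $S^{V_2}$ is by convention the one-point compactification of the linear representation $V_2$, and the point at infinity is fixed by any linear action; equivalently, the origin is fixed. So the hypotheses of the (equivariant version of the) lemma are met, and we obtain an equivariant equivalence
\[
\operatorname{Cof}\bigl(S^{V_1} \times S^{V_2} \twoheadrightarrow S^{V_1}\bigr) \ \simeq\ \Sigma(S^{V_1} \wedge S^{V_2}) \vee \Sigma S^{V_2}.
\]

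Next I would invoke the standard identification for representation spheres: for any two $\C_2$-representations $V_1$ and $V_2$, the one-point compactification satisfies $S^{V_1} \wedge S^{V_2} \simeq S^{V_1 \oplus V_2}$. Since $V_1$ and $V_2$ are assumed orthogonal, their sum is simply written $V_1 + V_2$. The suspension $\Sigma$ in the cofibration sequence is the ordinary (unreduced becoming reduced) suspension, which equivariantly is smashing with $S^1$ endowed with the \emph{trivial} $\C_2$-action; thus $\Sigma S^W = S^{W+1}$ for any representation $W$, where ``$+1$'' denotes adding a copy of the trivial one-dimensional representation.

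Putting these two identifications into the displayed equation, the left summand becomes $\Sigma S^{V_1+V_2} = S^{V_1+V_2+1}$ and the right summand becomes $\Sigma S^{V_2} = S^{V_2+1}$, which is exactly the claimed decomposition. There is no real obstacle here: the statement is a direct specialization of Lemma \ref{lem:halfsmash} combined with the compatibility of one-point compactifications with direct sums, and the only small point worth flagging is the observation that $0 \in V_2$ (equivalently $\infty \in S^{V_2}$) provides the required $\C_2$-fixed basepoint so that the lemma applies in the equivariant category.
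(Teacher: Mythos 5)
Your argument is correct and is exactly the deduction the paper intends: the corollary is left as an immediate consequence of Lemma~\ref{lem:extendequator}\hspace{-\labelsep}\relax\,\textemdash\,sorry, of Lemma~\ref{lem:halfsmash}\,\textemdash\,and you carry it out by taking $A=S^{V_1}$, $B=S^{V_2}$, using $S^{V_1}\wedge S^{V_2}\simeq S^{V_1+V_2}$ and $\Sigma S^W\simeq S^{W+1}$, and checking that the point at infinity of $S^{V_2}$ is a $\C_2$-fixed basepoint so the equivariant version of the lemma applies. No further comment is needed.
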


In particular we have a stable equivariant splitting
\[
\HF \wedge  (S^{V_1} \ltimes S^{V_2}) \simeq \HF \wedge S^{V_1+1} \vee \HF \wedge S^{V_1 + V_2 +1} = \HF[V_1 +1, V_1+V_2+1]
\]




Smashing the first homotopy pushout with the spectrum $\HF$, we get a homotopy pushout of equivariant spectra:
\[
\xymatrix{
\HF[\rho,4\rho-1,5\rho-1] \ar[r] \ar[d] & \HF \wedge M \ar[d] \ar[r] & \HF \wedge Cof_h \ar@{=}[d] \\
\HF[\rho] \ar[d]^0 \ar[r]^-{\textcircled{\tiny{1}}} & \HF[\rho,2\rho,3\rho,4\rho,5\rho] \ar[d] \ar[r]^-{\textcircled{\tiny{2}}} & \HF \wedge Cof_h \\
\HF[4\rho,5\rho] \ar@{=}[r] & \HF[4\rho,5\rho] & 
}  
\]
where we have applied Corollary~\ref{cor:halfsmash} and May's Theorem~\ref{thm:Clover} to identify the spectra. The map labelled with a zero is
null-homotopic, see Lemma~\ref{lem:halfsmash}. We proceed to identify the two arrows labeled $\textcircled{\tiny{1}}$ and $\textcircled{\tiny{2}}$.

\begin{lemma}\label{lem:HFofM}
The map $\HF \wedge M \rightarrow \HF \wedge Cof_h$ is the unique map of $\HF$-modules
$\HF[\rho,2\rho,3\rho] \rightarrow \HF[2\rho,3\rho,4\rho,5\rho]$
that collapses the $\HF[\rho]$ factor and identifies the $\HF[2\rho,3\rho]$ factor with the corresponding part in the target.
\end{lemma}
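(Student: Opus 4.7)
The plan is to factor the map $\HF \wedge M \to \HF \wedge Cof_h$ through $\HF \wedge \widetilde{F}_{10}$ and exploit the known structure of the latter, using the three computational tools highlighted just before the statement.

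By Proposition~\ref{prop:ftildeisconj}, $\widetilde{F}_{10}$ is a conjugation manifold, so Theorem~\ref{thm:PRS} combined with Floyd's non-equivariant cohomology (classes in degrees $2, 4, 6, 8, 10$) gives $\HF \wedge \widetilde{F}_{10} \simeq \HF[\rho, 2\rho, 3\rho, 4\rho, 5\rho]$. Applying May's Theorem~\ref{thm:Clover} to both $\HF \wedge M$ and $\HF \wedge Cof_h$ splits them into representation- and antipodal-sphere summands. Antipodal summands would contribute cells in non-equivariant degree $0$; but Floyd's calculation, together with Lefschetz duality for $M$ and the identification $Cof_h \simeq \widetilde{F}_{10}/S^\rho$ coming from the bottom row of the upper pushout and the equivariant section of Section~\ref{sec:Floydsmnflds}, shows that the reduced non-equivariant mod $2$ cohomology of $M$ lies in degrees $2, 4, 6$ and that of $Cof_h$ in $4, 6, 8, 10$, leaving no room for such contributions.

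Next, I would pin down the representation-sphere exponents using the cofibration sequences in the diagram above. The middle column is $\HF \wedge M \to \HF[\rho, 2\rho, 3\rho, 4\rho, 5\rho] \to \HF[4\rho, 5\rho]$, with its second map determined by the preferred-generator principle applied to Floyd's description of the quotient $\widetilde{F}_{10} \to Cof_{vA}$: it is the canonical projection onto the top two summands, forcing $\HF \wedge M \simeq \HF[\rho, 2\rho, 3\rho]$. Dually, the equivariant section $S^\rho \hookrightarrow \widetilde{F}_{10}$ matches preferred generators with the $\HF[\rho]$ summand of $\HF \wedge \widetilde{F}_{10}$, so the cofiber of $\HF \wedge S^\rho \to \HF \wedge \widetilde{F}_{10}$ yields $\HF \wedge Cof_h \simeq \HF[2\rho, 3\rho, 4\rho, 5\rho]$.

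Finally, I would factor the map of the lemma as the composite $\HF \wedge M \hookrightarrow \HF \wedge \widetilde{F}_{10} \twoheadrightarrow \HF \wedge Cof_h$. The first arrow sends each of the three summands of $\HF \wedge M$ to the corresponding summand of $\HF \wedge \widetilde{F}_{10}$ on preferred generators, since the non-equivariant inclusion is an isomorphism on $\HH^*$ in degrees $0, 2, 4, 6$ by the long exact sequence of $(\widetilde{F}_{10}, M)$. The second arrow kills $\HF[\rho]$ and is the identity on $\HF[2\rho, 3\rho, 4\rho, 5\rho]$, so composing produces exactly the map described in the lemma. Uniqueness follows from the first tool: the values on all preferred generators have been prescribed. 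The main obstacle is the exclusion of antipodal summands in the May splittings; once settled by the degree-zero cell count, the cofibration sequences and preferred-generator principle handle the rest.
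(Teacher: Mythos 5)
Your proposal is correct, but it travels a genuinely different route from the paper. The paper never computes the non-equivariant homology of $M$ directly: it observes that the section of $\widetilde{F}_{10}\to S^\rho$ makes $\textcircled{\tiny{1}}\colon \HF[\rho]\to \HF[\rho,\dots,5\rho]$ a split monomorphism, deduces from the triangulated structure that $\textcircled{\tiny{2}}$ is a split epimorphism, compares the resulting triangle with the trivially split one $\HF[\rho]\to\HF[\rho]\oplus\HF[2\rho,\dots,5\rho]\to\HF[2\rho,\dots,5\rho]$, and invokes Nakayama's Lemma (using that $\HF_\star$ is local) plus the Five Lemma to identify $\HF\wedge Cof_h$ and then $\HF\wedge M$. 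This entirely avoids both the Lefschetz-duality computation of $\HH^*(M;\F_2)$ and the explicit exclusion of antipodal summands from the May splitting — the Nakayama argument rules them out for free. Your route instead pins down the underlying non-equivariant homology of $M$ and $Cof_h$ by hand, kills the antipodal summands via the degree-zero cell count, and then determines the $\rho$-exponents from the cofibration sequences. Both are sound; the paper's is leaner and closer to pure triangulated-category bookkeeping, while yours is more concrete and makes the geometry of $M$ visible. One small point worth spelling out if you wrote this up: the ``preferred generator'' step needs that $\HF_{(k-l)\rho}=0$ for $k\ne l$, which is what forces maps between $\HF[k\rho]$-type summands to be block-diagonal; this vanishing is the silent ingredient that makes ``determined on preferred generators'' actually conclusive, and it is precisely what the paper's Nakayama/Five-Lemma device is designed to sidestep. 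Your observation that $\HF\wedge M\to\HF\wedge Cof_h$ factors through $\HF\wedge\widetilde{F}_{10}$ is a nice way to organize the final identification of the map, and is implicit but not stated in the paper.
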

\begin{proof}

Because the fibration $\widetilde{F_{10}} \rightarrow S^\rho$ splits, and the arrow $\textcircled{\tiny{1}}$ is induced by an arbitrary splitting 
(they are all homotopic, since the splittings we consider come from the choice of a fixed point in the sphere $S^{4\rho-1}$, and there the 
fixed points form a connected space), we get that $\textcircled{\tiny{1}}$ is a split monomorphism, hence from the triangulated structure 
for $\HF$-modules in the equivariant stable homotopy category,  $\textcircled{\tiny{2}}$ is a split epimorphism. 

We can now construct a commutative  solid  diagram:
\[
\xymatrix{
\HF[\rho] \ar[r] \ar@{=}[d]& \HF[\rho]\oplus \HF[2\rho,3\rho,4\rho,5\rho] \ar[r] \ar[d]^{\textcircled{\tiny{1}}\oplus Can} & HF[2\rho,3\rho,4\rho,5\rho] \ar@{..>}[d] \\
\HF[\rho] \ar[r] &  \HF[\rho, 2\rho,3\rho,4\rho,5\rho] \ar[r] & \HF\wedge Cof_h
}
\]
By the triangulated structure,  the dotted arrow exists and enhances the diagram to a morphism of triangles. 
By construction the middle arrow is an epimorphism in homotopy and a $\HF_\star$-module map. Because this ring is local, 
by Nakayama's Lemma, the map is an isomorphism. The five lemma shows then that the dotted arrow is an isomorphism, and 
by identifying $\HF \wedge Cof_h$ via this isomorphism we have as desired that $\HF \wedge Cof_h \simeq \HF[2\rho,3\rho,4\rho,5\rho]$.
 Likewise the vertical map $\HF[\rho,2\rho,3\rho,4\rho,5\rho] \rightarrow \HF[4\rho,5\rho]$ is the projection, which
allows us to identify $\HF \wedge M$ with $\HF[\rho,2\rho,3\rho]$ and the map $\HF \wedge M \rightarrow \HF \wedge Cof_h$.
\end{proof}

We finally come to our main result. We use now our second pushout diagram in which we plug in the previous identification.

\begin{theorem}\label{thmF10isconjugation}
The manifold $F_{10}$ is a smooth conjugation manifold with fixed points $F_5$.
\end{theorem}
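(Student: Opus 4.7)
The plan is to verify the cohomological purity criterion of Theorem~\ref{thm:PRS}: it suffices to exhibit non-negative integers $n_1,\dots,n_k$ with $\HF \wedge F_{10} \simeq \bigvee_i \HF \wedge S^{n_i \rho}$. Since Floyd's calculation produces generators of $H^*(F_{10};\F_2)$ in degrees $4$, $6$, and $10$, the only candidate splitting is $\HF \wedge F_{10} \simeq \HF[2\rho,3\rho,5\rho]$, and this is what I aim to establish.

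The starting point is the second homotopy pushout displayed in Section~\ref{sec:conjugation}, which after smashing with $\HF$ yields the cofiber sequence
\[
\HF \wedge M \longrightarrow \HF \wedge F_{10} \longrightarrow \HF \wedge Cof_{vB}.
\]
Lemma~\ref{lem:HFofM} already identifies the first term as $\HF[\rho,2\rho,3\rho]$. For the third term, observe that $D(\rho+1)$ is equivariantly contractible to the origin (a fixed point), so the left vertical arrow of the pushout deforms equivariantly to the projection $S^\rho \times S^{4\rho-1} \to S^{4\rho-1}$; Corollary~\ref{cor:halfsmash} then yields $\HF \wedge Cof_{vB} \simeq \HF[\rho+1,5\rho]$.

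The heart of the argument is the analysis of the connecting map
\[
\delta\colon \HF[\rho+1,5\rho] \longrightarrow \Sigma \HF[\rho,2\rho,3\rho] = \HF[\rho+1,2\rho+1,3\rho+1].
\]
Its $\HF[\rho+1]$-to-$\HF[\rho+1]$ component must be an equivalence, for otherwise the cofiber would acquire a summand $\HF \wedge S^{\rho+1}$ and hence a non-equivariant class in degree $3$, contradicting Floyd's computation; since $\hom_{\HF}(\HF[\rho+1],\HF[\rho+1]) \cong \pi_0^{\C_2}(\HF) = \F_2$, any non-zero such map is an equivalence. A standard change of basis in the triangulated category of $\HF$-modules then splits off the acyclic sub-triangle corresponding to this iso and reduces the task to showing that the residual connecting map $\HF[5\rho] \to \HF[2\rho+1,3\rho+1]$ is null. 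Non-equivariantly this is automatic ($\pi_5 H\F_2 = \pi_3 H\F_2 = 0$), but purely equivariant ghost components must also be excluded. This is where I expect the main obstacle to lie; I would handle it by a Nakayama-type argument with $\HF_\star$-modules analogous to the proof of Lemma~\ref{lem:HFofM}, or alternatively by first invoking May's splitting (Theorem~\ref{thm:Clover}) and using Floyd's non-equivariant data together with the already established fixed-point identification $(F_{10})^{\C_2} = F_5$ to constrain the list of possible wedge summands. Once this is done, the sequence splits to give $\HF \wedge F_{10} \simeq \HF[2\rho,3\rho,5\rho]$, and Theorem~\ref{thm:PRS} finishes the proof.
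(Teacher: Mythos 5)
Your proposal takes a genuinely different route from the paper, and it has a real gap at precisely the point you flag.

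You analyze the \emph{vertical} cofiber sequence of the middle column of the second pushout,
\[
\HF \wedge M \longrightarrow \HF \wedge F_{10} \longrightarrow \HF \wedge Cof_{vB},
\]
which forces you to compute the connecting map $\delta\colon \HF[\rho+1,5\rho]\to\HF[\rho+1,2\rho+1,3\rho+1]$ essentially from scratch. The paper instead takes the \emph{horizontal} cofiber sequence of the middle row, rotated once to the left, so that the crucial map becomes
\[
\Sigma^{-1}(\HF\wedge Cof_h)=\HF[2\rho-1,3\rho-1,4\rho-1,5\rho-1]\longrightarrow \HF\wedge\bigl(D(\rho+1)\times S^{4\rho-1}\bigr)=\HF[4\rho-1].
\]
The decisive point is that the two pushouts share the cofiber $Cof_h$, so this rotated map is determined by composing the (rotated) map $\textcircled{\tiny{1}}\colon\Sigma^{-1}(\HF\wedge Cof_h)\to\HF\wedge(S^\rho\times S^{4\rho-1})$ — which is pinned down by Lemma~\ref{lem:HFofM} applied to the \emph{first} pushout — with the obvious projection $\textcircled{\tiny{2}}$ coming from contracting $D(\rho+1)$. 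This makes the rotated map a split projection onto the $\HF[4\rho-1]$ coordinate, and the answer $\HF[2\rho,3\rho,5\rho]$ drops out by taking the cofiber. Your approach cannot reuse Lemma~\ref{lem:HFofM} in this way, because the source and target of $\delta$ are not the terms analyzed there.

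The specific gap: after you split off the unit on the $\HF[\rho+1]$-summand (a nice observation, and consistent with Floyd's non-equivariant data), you are left with a residual map $\HF[5\rho]\to\HF[2\rho+1,3\rho+1]$ governed by $\pi^{\C_2}_{3\rho-1}\HF$ and $\pi^{\C_2}_{2\rho-1}\HF$. Neither of your proposed repairs closes this. A Nakayama-style argument as in Lemma~\ref{lem:HFofM} needs a commutative ladder whose middle map is already known to be a $\pi_\star$-surjection; here you have no such ladder because the third term of your triangle is the unknown $\HF\wedge F_{10}$, not a term you control. The alternative via May's Theorem~\ref{thm:Clover} plus the underlying and fixed-point degrees also falls short: those constraints only say $\{m_i\}=\{0,2,3,5\}$ and $\{m_i+n_i\}=\{0,4,6,10\}$ as multisets, and the pairing is not unique — for instance $2+4\sigma,\,3+\sigma,\,5+5\sigma$ is a priori admissible — so you need an extra input (exactly what Lemma~\ref{lem:HFofM} provides on the paper's route) to force $m_i=n_i$. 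As written, your argument establishes the setup and the target answer but does not yet constitute a proof.
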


\begin{proof}
We use Lemma~\ref{lem:HFofM} and apply the same strategy to the second pushout diagram smashed with $\HF$; we however  rotate the horizontal triangles to the left to simplify the final computation. The identification of the vertical cofiber comes again from Corollary~\ref{cor:halfsmash}.
\[
\xymatrix{
\HF[2\rho-1,3\rho-1,4\rho-1,5\rho-1] \ar@{>}[r]^-{\textcircled{\tiny{1}}} \ar@{=}[d] & \HF[\rho,4\rho-1,5\rho-1] \ar[r] \ar[d]^-{\textcircled{\tiny{2}}}  
& \HF[\rho,2\rho,3\rho] \ar[d] \\ 
\HF[2\rho-1,3\rho-1,4\rho-1,5\rho-1] \ar@{>}[r]^-{\textcircled{\tiny{3}}}  & \HF[4\rho-1] \ar[d]^0 \ar[r] & \HF \wedge F_{10} \ar[d] \\
&  \HF[\rho+1,5\rho] \ar@{=}[r] & \HF[\rho+1,5\rho] 
}  
\]
We know from Lemma~\ref{lem:HFofM} that the map ${\textcircled{\tiny{1}}}$ maps $4\rho-1 \mapsto 4\rho-1$ and that 
${\textcircled{\tiny{2}}}$ is the canonical projection, hence ${\textcircled{\tiny{3}}}$ is the split epimorphism projection onto the $4\rho-1$
component. Therefore
$\HF\wedge F_{10} \simeq \HF[2\rho,3\rho,5\rho]$ and we conclude by Theorem~\ref{thm:PRS} that $F_{10}$
is a conjugation space. The fixed points have already been identified earlier as $F_5$ and we have constructed the larger
manifold as a smooth $\C_2$-equivariant manifold.
\end{proof}

\bibliographystyle{plain}\label{biblography}

\end{document}